\newtheorem{theorem}{Theorem}
\newtheorem{lemma}{Lemma}
\theoremstyle{definition}
\theoremstyle{remark}
\newtheorem{remark}[theorem]{Remark}
\numberwithin{equation}{section}
\newcommand{\ket}[1]{\rvert#1\rangle}
\newcommand{\bra}[1]{\langle #1\rvert}
\newcommand{\braket}[2]{\langle #1\rvert#2\rangle}
\newcommand{\Braket}[3]{\bra{#1}\,#2\,\ket{#3}}
\begin{document}
\title[$\mathfrak{osp}_{q}(1|2)$ and $q$-Bannai--Ito polynomials]{The quantum superalgebra $\mathfrak{osp}_{q}(1|2)$ and a $q$-generalization of the Bannai--Ito polynomials}
\author[V.X. Genest]{Vincent X. genest}
\address{Centre de Recherches Math\'ematiques, Universit\'e de Montr\'eal, P.O. Box 6128, Centre-ville Station, Montr\'eal (Qu\'ebec) H3C 3J7}
\email{genestvi@crm.umontreal.ca}
\author[L. Vinet]{Luc Vinet}
\address{Centre de Recherches Math\'ematiques, Universit\'e de Montr\'eal, P.O. Box 6128, Centre-ville Station, Montr\'eal (Qu\'ebec) H3C 3J7}
\email{vinetl@crm.umontreal.ca}
\author[A. Zhedanov]{Alexei Zhedanov}
\address{Donetsk Institute for Physics and Technology, Donetsk 83114, Ukraine}
\email{zhedanov@yahoo.com}
\subjclass[2010]{16T05, 33C45}
\date{}
\dedicatory{}
\begin{abstract}
The Racah problem for the quantum superalgebra $\mathfrak{osp}_{q}(1|2)$ is considered. The intermediate Casimir operators are shown to realize a $q$-deformation of the Bannai--Ito algebra. The Racah coefficients of $\mathfrak{osp}_q(1|2)$ are calculated explicitly in terms of basic orthogonal polynomials that $q$-generalize the Bannai--Ito polynomials.  The relation between these $q$-deformed Bannai--Ito polynomials and the $q$-Racah/Askey-Wilson polynomials is discussed.
\end{abstract}
\maketitle
\section{Introduction}
The goal of this paper is to examine the Racah problem for the quantum superalgebra $\mathfrak{osp}_{q}(1|2)$ and to present a $q$-extension of the Bannai--Ito polynomials.

The Bannai--Ito (BI) polynomials were first introduced by Bannai and Ito in their complete classification of the orthogonal polynomials possessing the Leonard duality property \cite{1984_Bannai&Ito}. The BI polynomials, denoted by $B_{n}(x)$, depend on four real parameters $\rho_1$, $\rho_2$, $r_1$, $r_2$ and can be defined by the three-term recurrence relation
\begin{align}
\label{BI-Recurrence}
 xB_{n}(x)=B_{n+1}(x)+(\rho_1-A_n-C_n)\,B_{n}(x)+A_{n-1}C_{n}\,B_{n-1}(x),
\end{align}
with $B_{-1}(x)=0$, $B_{0}(x)=1$ and where the recurrence coefficients read
\begin{align}
\label{BI-Recurrence-Coefficients}
\begin{aligned}
 A_{n}&=
 \begin{cases}
  \frac{(n+2\rho_1-2r_1+1)(n+2\rho_1-2r_2+1)}{4(n+\kappa+1/2)} & \text{$n$ even}
  \\
  \frac{(n+2\kappa)(n+2\rho_1+2\rho_2+1)}{4(n+\kappa+1/2)} & \text{$n$ odd}
 \end{cases},
 \\
 C_{n}&=
 \begin{cases}
  -\frac{n(n-2r_1-2r_2)}{4(n+\kappa-1/2)} & \text{$n$ even}
  \\
  -\frac{(n+2\rho_2-2r_1)(n+2\rho_2-2r_2)}{4(n+\kappa-1/2)} & \text{$n$ odd}
 \end{cases},
 \end{aligned}
\end{align}
with $\kappa=\rho_1+\rho_2-r_1-r_2+1/2$. The polynomials $B_{n}(x)$ can be obtained as $q\rightarrow -1$ limits of the $q$-Racah \cite{1984_Bannai&Ito} or of the Askey-Wilson \cite{2012_Tsujimoto&Vinet&Zhedanov_AdvMath_229_2123}  polynomials, which sit at the top of the Askey scheme of hypergeometric orthogonal polynomials \cite{2010_Koekoek_&Lesky&Swarttouw}. The Bannai--Ito polynomials are eigenfunctions of the most general self-adjoint first-order shift operator with reflections preserving the space of polynomials of a given degree  \cite{2012_Tsujimoto&Vinet&Zhedanov_AdvMath_229_2123}. Up to affine transformations, this operator has the expression
\begin{align}
\label{BI-Operator}
 \mathcal{L}=D(x) (1-R)+E(x) (T^{+}R-1)+\kappa,
\end{align}
with $D(x)$ and $E(x)$ given by
\begin{align*}
 D(x)=\frac{(x-\rho_1)(x-\rho_2)}{x},\quad E(x)=\frac{(x-r_1+1/2)(x-r_2+1/2)}{x+1/2},
\end{align*}
and where $T^{+}f(x)=f(x+1)$ is the shift operator and $Rf(x)=f(-x)$ is the reflection operator. The BI polynomials satisfy the eigenvalue equation \cite{2012_Tsujimoto&Vinet&Zhedanov_AdvMath_229_2123}
\begin{align}
\label{BI-Eigenvalue}
\mathcal{L}\,B_{n}(x)=(-1)^{n}(n+\kappa)\,B_{n}(x),\quad n=0,1,2,\ldots
\end{align}

There is an algebraic structure associated to the BI polynomials which is called the Bannai--Ito algebra \cite{2012_Tsujimoto&Vinet&Zhedanov_AdvMath_229_2123}. It is defined as the associative algebra over $\mathbb{C}$ with generators $A_1, A_2, A_3$ obeying the relations 
\begin{align}
\label{BI-Algebra}
 \{A_3,A_1\}=A_2+\omega_2,\quad \{A_1,A_2\}=A_3+\omega_3,\quad \{A_2,A_3\}=A_1+\omega_1,
\end{align}
where $\{x,y\}=xy+yx$ is the anticommutator and where $\omega_1,\omega_2,\omega_3$ are complex structure constants. It is clear that in \eqref{BI-Algebra} only two of the generators are genuinely independent. The relation between the algebra \eqref{BI-Algebra} and the polynomials $B_{n}(x)$ is established by noting that the operators
\begin{align*}
 A_1=\mathcal{L},\quad A_2=2x+1/2,
\end{align*}
realize the relations \eqref{BI-Algebra} with values of the structure constants depending on the parameters $\rho_1$, $\rho_1$, $r_1$, $r_2$. Hence the Bannai--Ito algebra \eqref{BI-Algebra} encodes, inter alia, the bispectral properties \eqref{BI-Recurrence} and \eqref{BI-Eigenvalue} of the Bannai--Ito polynomials. Let us mention that since its introduction, the BI algebra has appeared in several instances, notably in connection with generalizations of harmonic \cite{2015_Genest&Vinet&Zhedanov_CommMathPhys} and Clifford \cite{2015_DeBie&Genest&Vinet_ArXiv_1501.03108} analysis involving Dunkl operators, and also as a symmetry algebra of superintegrable systems \cite{2014_Genest&Vinet&Zhedanov_JPhysA_47_205202}; see \cite{2015_DeBie&Genest&Tsujimoto&Vinet&Zhedanov} for an overview.

It was recently determined that the Bannai--Ito polynomials serve as Racah coefficients in the direct product of three unitary irreducible representations (UIRs) of the algebra $sl_{-1}(2)$ \cite{2014_Genest&Vinet&Zhedanov_ProcAmMathSoc_142_1545}. This algebra, introduced in \cite{2011_Tsujimoto&Vinet&Zhedanov_SIGMA_7_93}, is closely related to $\mathfrak{osp}(1|2)$ and its UIRs are associated to the one-dimensional para-Bose oscillator \cite{1980_Mukunda&Sudarshan&Sharma&Mehta_JMathPhys_21_2386}. The identification of the Bannai--Ito polynomials as Racah coefficients in \cite{2014_Genest&Vinet&Zhedanov_ProcAmMathSoc_142_1545} followed from the observation that the intermediate Casimir operators entering the Racah problem for $sl_{-1}(2)$ realize the Bannai--Ito algebra.

In this paper we consider the quantum superalgebra $\mathfrak{osp}_q(1|2)$; the Racah coefficients arising in the tensor product of three of its UIRs are calculated explicitly in terms of basic orthogonal polynomials that tend to the Bannai--Ito polynomials in the limit $q\rightarrow 1$. We give some of the properties of these $q$-deformed Bannai--Ito polynomials and discuss their relationship with the $q$-Racah and Askey-Wilson polynomials. The paper is divided as follows.

In Section two, the definition of the $\mathfrak{osp}_q(1|2)$ algebra is recalled and its extension by the grade involution is defined. UIRs of this extended $\mathfrak{osp}_{q}(1|2)$ and their Bargmann realizations are presented. In Section 3, the coproduct for $\mathfrak{osp}_q(1|2)$ is used to posit the Racah problem and the intermediate Casimir operators are introduced. It is shown that these operators realize a $q$-analog of the BI algebra. The finite-dimensional irreducible representations of this $q$-version of the BI algebra are constructed. These lead to an explicit expression of the Racah coefficients of $\mathfrak{osp}_q(1|2)$ in terms of $p$-Racah polynomials with base $p=-q$ which tend to the BI polynomials in the $q\rightarrow 1$ limit. In the fourth section, the $q$-analogs of the BI polynomials are defined independently from the Racah problem and their bispectral properties (recurrence relation and eigenvalue equation) are given explicitly; their relation with the Askey-Wilson polynomials is also detailed. In Section 5, the $q\rightarrow 1$ limit of the results is discussed. We conclude with an outlook.

\section{The quantum superalgebra $\mathfrak{osp}_q(1|2)$}
In this section, the definition of the quantum superalgebra $\mathfrak{osp}_q(1|2)$ is recalled and its extension by the grade involution is presented. The Hopf structure of this extended $\mathfrak{osp}_{q}(1|2)$ is described. UIRs of this algebra are constructed and their Bargmann realizations are provided.

\subsection{Definition and Casimir operator}
Let $q$ be a real number with $0<q<1$. The quantum superalgebra $\mathfrak{osp}_{q}(1|2)$ is the algebra presented in terms of one even generator $A_0$ and two odd generators $A_{\pm}$ obeying the commutation relations \cite{1989_Kulish&Reshetikhin_LettMathPhys_18_143}
\begin{align}
\label{OSP-1}
 [A_0,A_{\pm}]=\pm A_{\pm},\quad \{A_{+},A_{-}\}=[2A_0]_{q^{1/2}},
\end{align}
where $[x,y]=xy-yx$ is the commutator and where $[n]$ is the $q$-number
\begin{align*}
 [n]_{q}=\frac{q^{n}-q^{-n}}{q-q^{-1}}.
\end{align*}
 The abstract $\mathbb{Z}_2$-grading of the algebra \eqref{OSP-1} can be concretely realized by appending the grade involution $P$ to the set of generators and declaring that  the even and odd generators respectively commute and anticommute with $P$. The quantum superalgebra $\mathfrak{osp}_{q}(1|2)$ can hence be introduced as the algebra with generators $A_0$, $A_{\pm}$ and involution $P$ satisfying the commutation relations
 \begin{subequations}
 \label{OSPq}
 \begin{align}
 \label{First}
  [A_0,P]=0,\quad \{A_{\pm},P\}=0,\quad [A_0,A_{\pm}]=\pm A_{\pm},\quad \{A_{+},A_{-}\}=[2A_0]_{q^{1/2}},
 \end{align}
with $P^2=1$. It is convenient to define the operators
\begin{align*}
 K=q^{A_0/2},\quad K^{-1}=q^{-A_0/2}.
\end{align*}
In terms of these operators, the relations \eqref{First} read
\begin{gather}
\label{Second}
\begin{aligned}
 &K A_{+} K^{-1}=q^{1/2} A_{+},\quad K A_{-}K^{-1}=q^{-1/2} A_{-},\quad K K^{-1}=1,
 \\
 &[K,P]=0,\quad[K^{-1},P]=0,\quad \{A_{\pm},P\}=0,\quad \{A_{+},A_{-}\}=\frac{K^2-K^{-2}}{q^{1/2}-q^{-1/2}}.
 \end{aligned}
\end{gather}
\end{subequations}
We shall use both \eqref{First} and \eqref{Second}. The Casimir operator of $\mathfrak{osp}_q(1|2)$ reads
\begin{align}
\label{OSPq-Casimir}
 Q=\left[A_{+}A_{-}-\frac{q^{-1/2}K^2-q^{1/2}K^{-2}}{q-q^{-1}}\right]P.
\end{align}
It is easily verified that $Q$ commutes with all generators of \eqref{OSPq}. In \eqref{OSPq-Casimir}, the expression in the square bracket corresponds to the so-called sCasimir operator of $\mathfrak{osp}_{q}(1|2)$, which commutes with $A_0$ and anticommutes with $A_{\pm}$ \cite{1995_Lesniewski_JMathPhys_36_1457}.

\subsection{Hopf algebraic structure} The algebra \eqref{OSPq} can be endowed with a Hopf structure. Define the coproduct $\Delta: \mathfrak{osp}_q(1|2)\rightarrow\mathfrak{osp}_q(1|2)\otimes \mathfrak{osp}_q(1|2)$ as 
\begin{align}
\label{Coprod}
\Delta(A_{\pm})=A_{\pm}\otimes KP+K^{-1}\otimes A_{\pm}\quad \Delta(K)=K\otimes K,\quad \Delta(P)=P\otimes P,
\end{align}
the counit $\epsilon:\mathfrak{osp}_q(1|2)\rightarrow \mathbb{C}$ as
\begin{align}
 \label{Counit}
 \epsilon(P)=1,\quad \epsilon(K)=1,\quad& \epsilon(A_{\pm})=0,
\end{align}
and the coinverse $\sigma:\mathfrak{osp}_q(1|2)\rightarrow \mathfrak{osp}_q(1|2)$ by
\begin{align}
 \label{Coinverse}
 \sigma(P)=P,\quad \sigma(K)=K^{-1},\quad \sigma(A_{\pm})=q^{\pm 1/2} PA_{\pm}.
\end{align}
It is straightforward to verify that with \eqref{Coprod}, \eqref{Counit} and \eqref{Coinverse}, \eqref{OSPq} indeed has a Hopf algebraic structure. The conditions on $\Delta$, $\epsilon$ and $\sigma$ are well known; they can be found, for example, in Chap. 4 of \cite{2011_Underwood}.  The coproduct given in \eqref{Coprod} is not cocommutative since $\sigma \Delta\neq \Delta$, where $\sigma (a\otimes b)=b\otimes a$ is the flip automorphism. The alternative coproduct $\widetilde{\Delta}=\sigma \Delta$ and coinverse $\widetilde{S}=S^{-1}$ can be used to define another Hopf algebraic structure for \eqref{OSPq}; we shall not consider it here.

\begin{remark}
The coproduct \eqref{Coprod} appears different from the one presented in \cite{1989_Kulish&Reshetikhin_LettMathPhys_18_143}, as it explicitly involves the grade involution $P$. The two coproducts are however equivalent.  For elements in $\mathfrak{osp}_q(1|2)\otimes \mathfrak{osp}_q(1|2)$ a graded product law of the form $(a\otimes b)(c\otimes d)=(-1)^{p(b)}(-1)^{p(c)}(ac\otimes bd)$, where $p(x)$ gives the parity of $x$, was used in \cite{1989_Kulish&Reshetikhin_LettMathPhys_18_143} whereas the standard product rule $(a\otimes b)(c\otimes d)=ac\otimes bd$ is used here.
\end{remark}
\subsection{Unitary irreducible $\mathfrak{osp}_q(1|2)$-modules} Let $\epsilon$, $\mu$ be real numbers such that $\mu>0$, $\epsilon=\pm 1$ and let $W^{(\epsilon,\mu)}$ denote the infinite-dimensional vector space  spanned by the orthonormal basis vectors $\ket{\epsilon,\mu;n}$ where $n$ is a non-negative integer. The basis vectors satisfy 
\begin{align*}
 \braket{\epsilon,\mu;n'}{\epsilon,\mu;n}=\delta_{nn'},
\end{align*}
where $\delta$ is the Kronecker delta. Consider the $\mathfrak{osp}_q(1|2)$ actions
\begin{alignat}{2}
\label{Actions}
\begin{aligned}
 A_0\,\ket{\epsilon,\mu;n}&=(n+\mu+1/2)\,\ket{\epsilon,\mu;n},\quad & P\,\ket{\epsilon,\mu;n}&=\epsilon\, (-1)^{n}\,\ket{\epsilon,\mu;n},
 \\
 A_{+}\ket{\epsilon,\mu;n}&=\sqrt{\sigma_{n+1}}\,\ket{\epsilon,\mu;n+1},\quad & A_{-}\,\ket{\epsilon,\mu;n}&=\sqrt{\sigma_n}\,\ket{\epsilon,\mu;n-1},
 \end{aligned}
\end{alignat}
where $\sigma_n$ is of the form
\begin{align*}
 \sigma_n=[n+\mu]_{q}-(-1)^{n}[\mu]_{q},\quad n=0,1,2,\ldots.
\end{align*}
The vector space $W^{(\epsilon,\mu)}$ endowed with the actions \eqref{Actions}  forms a unitary irreducible $\mathfrak{osp}_q(1|2)$-module. Indeed, it is verified that the actions \eqref{Actions} comply with \eqref{OSPq}. The irreducibility follows from the fact that $\sigma_n>0$ for $n\geqslant 1$. The module $W^{(\epsilon,\mu)}$ is unitary, as it is realizes the $\star$-conditions
\begin{align}
\label{Star-Conditions}
 A_0^{\dagger}=A_0,\quad P^{\dagger}=P,\quad A_{\pm}^{\dagger}=A_{\mp}.
\end{align}
The representation space $W^{(\epsilon,\mu)}$ can be identified with the state space of the one-dimensional $q$-deformed parabosonic oscillator \cite{1990_Floreanini&Vinet_JPhysA_23_L1019}. On $W^{(\epsilon,\mu)}$, the Casimir operator \eqref{OSPq-Casimir} has the action
\begin{align}
\label{Multiple}
 Q\,\ket{\epsilon,\mu;n}=-\epsilon\,[\mu]_{q}\,\ket{\epsilon,\mu;n}.
\end{align}
The modules $W^{(\epsilon,\mu)}$ have a Bargmann realization on functions of argument $z$. In this realization, the basis vectors $\ket{\epsilon,\mu;n}\equiv e_n^{(\epsilon,\mu)}(z)$ have the expression
\begin{align*}
 e_{n}^{(\epsilon,\mu)}(z)=\frac{z^{n}}{\sqrt{\sigma_1 \sigma_2\cdots \sigma_n}},\qquad n=0,1,2,\ldots,
\end{align*}
and the $\mathfrak{osp}_q(1|2)$ generators take the form
\begin{gather}
\nonumber
 A_0(z)=z\partial_{z}+\mu+1/2,\quad K(z)=q^{(\mu+1/2)/2} T_{q}^{1/2},
 \\
 \label{Bargann-Realization}
 P(z)=\epsilon R_z,\qquad A_{+}(z)=z,
 \\
 \nonumber
 A_{-}(z)=q^{\mu}\frac{(T_{q}-R_z)}{(q-q^{-1})z}-q^{-\mu}\frac{(T_{q}^{-1}-R_z)}{(q-q^{-1})z},
\end{gather}
where $T_{q}^{h}f(z)=f(q^{h}z)$ and $R_z f(z)=f(-z)$.
\section{The Racah problem}
In this section, the Racah problem for $\mathfrak{osp}_q(1|2)$ is considered. The intermediate Casimir operators are defined and are seen to generate a $q$-analog of the Bannai--Ito algebra. The eigenvalues of the intermediate Casimirs are derived and the corresponding representations of the $q$-extended Bannai--Ito algebra are constructed. The explicit expression of the Racah coefficients for $\mathfrak{osp}_{q}(1|2)$ in terms of orthogonal polynomials is given.
\subsection{Outline the problem} The coproduct of $\mathfrak{osp}_{q}(1|2)$ allows to construct tensor product representations. Consider the $\mathfrak{osp}_{q}(1|2)$-module defined by
\begin{align}
\label{W}
 W=W^{(\epsilon_1,\mu_1)}\otimes W^{(\epsilon_2,\mu_2)}\otimes W^{(\epsilon_3,\mu_3)}.
\end{align}
The action of any generator $X$ on $W$ is prescribed by $(1\otimes \Delta)\Delta(X)$ or equivalently by $(\Delta\otimes 1)\Delta(X)$ since the coproduct is coassociative. When considering three-fold tensor product representations, three types of Casimir operators arise. There are three \emph{initial} Casimir operators $Q^{(1)}$, $Q^{(2)}$, $Q^{(3)}$ defined by
\begin{align}
\label{Initial-Casimir}
 Q^{(1)}=Q\otimes 1\otimes 1,\quad Q^{(2)}=1\otimes Q\otimes 1,\quad Q^{(3)}=1\otimes 1\otimes Q,
\end{align}
which are associated to each components of the tensor product \eqref{W}. On $W$, each initial Casimir operator $Q^{(i)}$ acts as a multiple of the identity. In view of \eqref{Multiple}, this multiple denoted by  $\tau_i$ is given by
\begin{align}
\label{Values}
\tau_i=-\epsilon_i\,[\mu_i]_{q},\qquad i=1,2,3.
\end{align}
There are two \emph{intermediate} Casimir operators $Q^{(12)}$, $Q^{(23)}$ defined by
\begin{align}
\label{Intermediate-Casimir}
 Q^{(12)}=\Delta(Q)\otimes 1,\quad Q^{(23)}=1\otimes \Delta(Q),
\end{align}
which are associated to $W^{(\epsilon_1,\mu_1)}\otimes W^{(\epsilon_2,\mu_2)}$ and $W^{(\epsilon_2,\mu_2)}\otimes W^{(\epsilon_3,\mu_3)}$, respectively. A direct calculation using \eqref{OSPq-Casimir} and \eqref{Coprod} shows that $\Delta(Q)$ has the expression
\begin{multline*}
 \Delta(Q)=q^{1/2}\left(A_{-}K^{-1}P\otimes A_{+}K\right)-q^{-1/2}\left( A_{+}K^{-1}P\otimes A_{-}K\right)
 \\
 -[1/2]_{q}\,K^{-2}P\otimes K^{2}P+Q\otimes K^2P+K^{-2}P\otimes Q.
\end{multline*}
Finally, there is the \emph{total} Casimir operator $\mathcal{Q}$ defined by
\begin{align*}
 \mathcal{Q}=(1\otimes \Delta)\Delta(Q)=(\Delta\otimes 1)\Delta(Q),
\end{align*}
which is associated to the whole module $W$. The total Casimir operator reads
\begin{multline}
\label{Total-Casimir}
\mathcal{Q}=q^{1/2}\left(A_{-}K^{-1}P\otimes1\otimes A_{+}K\right)-q^{-1/2}\left(A_{+}K^{-1}P\otimes 1\otimes A_{-}K\right)
\\
-K^{-2}P\otimes Q\otimes K^2P+\Delta(Q)\otimes K^2P+K^{-2}P\otimes \Delta(Q).
\end{multline}
The operators $Q^{(12)}$ and $Q^{(23)}$ both commute with $\mathcal{Q}$, but they do not commute with one another.  Moreover, the operators $Q^{(12)}$, $Q^{(23)}$ and $\mathcal{Q}$ all commute by construction with the operator $E$ which reads
\begin{align*}
 E=(1\otimes \Delta)\Delta (A_0)=A_0\otimes 1\otimes 1+1\otimes A_0\otimes 1+1\otimes 1\otimes A_0.
\end{align*}
Each of $\{Q^{(12)},\mathcal{Q}, E\}$ and $\{Q^{(23)}, \mathcal{Q},E\}$ forms a complete set of self-adjoint commuting operators with respect to $W$.

We introduce two distinct bases for $W$ associated to the two complete sets of commuting operators exhibited above. The first one consists of the orthonormal basis vectors $\ket{m;\tau_{12};\tau}$ defined by the eigenvalue equations
\begin{equation}
\label{Basis-12}
\begin{gathered}
 Q^{(12)}\ket{m;\tau_{12};\tau}=\tau_{12}\ket{m;\tau_{12};\tau},\quad \mathcal{Q}\ket{m;\tau_{12};\tau}=\tau \ket{m;\tau_{12};\tau},\quad 
 \\
 E \ket{m;\tau_{12};\tau}=m\ket{m;\tau_{12};\tau}.
 \end{gathered}
\end{equation}
The second one consists of the orthonormal basis vectors $\ket{m;\tau_{23};\tau}$ defined by the eigenvalue equations
\begin{equation}
\label{Basis-23}
\begin{gathered}
 Q^{(23)}\ket{m;\tau_{23};\tau}=\tau_{23}\ket{m;\tau_{23};\tau},\quad \mathcal{Q}\ket{m;\tau_{23};\tau}=\tau \ket{m;\tau_{23};\tau},\quad 
 \\
 E \ket{m;\tau_{23};\tau}=m\ket{m;\tau_{23};\tau}.
 \end{gathered}
\end{equation}
The Racah problem consist in the determination of the \emph{Racah coefficients}, which are the transition coefficients between these two orthonormal bases. Such coefficients are easily shown to be independent of $m$ \cite{2003_VanderJeugt_OrthPoly&SpecFuns_25}. We hence write
\begin{align}
\label{Racah-Coef}
 \begin{bmatrix}
  \tau_1  & \tau_2 & \tau_3
  \\
  \tau_{12} & \tau_{23} & \tau
 \end{bmatrix}
 =\braket{m;\tau_{12};\tau}{m;\tau_{23};\tau},
\end{align}
and refer to the left-hand side of \eqref{Racah-Coef} as the Racah coefficients for $\mathfrak{osp}_q(1|2)$. For more details on the Racah problem for $\mathfrak{sl}(2)$ and $\mathfrak{sl}_q(2)$, one can consult \cite{2011_Groenevelt_SIGMA_7_77, 1991_Vilenkin&Klimyk}.
\subsection{Main observation: $q$-deformation of the Bannai--Ito algebra} The properties of the Racah coefficients are encoded in the algebraic interplay between the intermediate and total Casimir operators. A fruitful approach is therefore to investigate the commutation relations that these operators satisfy \cite{2014_Genest&Vinet&Zhedanov_ProcAmMathSoc_142_1545,1988_Granovskii&Zhedanov_JETP_94_49}. Introduce the operators $I_3$ and $I_1$ defined as
\begin{align}
\label{Generator-Def}
 I_1=- Q^{(23)},\qquad I_3=-Q^{(12)}.
\end{align}
Let $\{A,B\}_{q}$ denote the ``$q$-anticommutator'' 
\begin{align*}
 \{A,B\}_{q}=q^{1/2} AB+q^{-1/2} BA,
\end{align*}
and introduce the operator $I_2$ through the relation
\begin{align*}
\{I_3,I_1\}_{q}\equiv I_2+(q^{1/2}+q^{-1/2})\left[Q^{(3)}Q^{(1)}+Q^{(2)}\mathcal{Q}\right].
\end{align*}
An involved but direct calculation shows that these operators satisfy the relations
\begin{align*}
\{I_i,I_j\}_{q}=I_k+(q^{1/2}+q^{-1/2})\left[Q^{(i)}Q^{(j)}+Q^{(k)}\mathcal{Q}\right],
\end{align*}
where $(ijk)$ is an even permutation of $\{1,2,3\}$. It follows that the bases \eqref{Basis-12} and \eqref{Basis-23} that enter the Racah problem support representations of the algebra
\begin{align}
\label{q-BI}
 \{I_{i},I_j\}_q=I_k+\iota_k,\qquad \iota_{k}=(q^{1/2}+q^{-1/2})(\tau\tau_k+\tau_i\tau_j),
\end{align}
where $(ijk)$ is an even permutation of $\{1,2,3\}$ and where $\tau_i$ is given by \eqref{Values}; the values of $\tau$ that can occur remain to be evaluated. Since $I_3$ and $I_1$ are proportional to $Q^{(12)}$ and $Q^{(23)}$, the Racah coefficients \eqref{Racah-Coef} coincide with the transition coefficients between the eigenbases of $I_3$ and $I_1$ in the appropriate representations of \eqref{q-BI}, which will be studied below. The operator 
\begin{multline}
\label{Casimir-q-BI}
 C=(q^{-1/2}-q^{3/2})I_1I_2I_3+q I_1^2+q^{-1}I_2^2+q I_3^2
 \\
 -(1-q)\,\iota_1\, I_1-(1-q^{-1})\, \iota_2\,  I_2-(1-q)\, \iota_3\,  I_3,
\end{multline}
can be seen to commute with $I_1$, $I_2$ and $I_3$. After considerable algebra, one finds that on the bases \eqref{Basis-12} and \eqref{Basis-23}, the operator $C$ takes the value
\begin{align}
\label{Casimir-Value}
 C=-(q-q^{-1})^2 \tau_1\tau_2\tau_3\tau+\tau_1^2+\tau_2^2+\tau_3^2+\tau^2-q/(1+q)^2.
\end{align}
The algebra \eqref{q-BI} stands as a $q$-deformation of the Bannai--Ito algebra with $C$ as its Casimir operator. 

Let us note that the algebra \eqref{q-BI} can be presented in terms of only two generators. Eliminating $I_2$ from \eqref{q-BI}, one finds that $I_1$ and $I_3$ satisfy
\begin{subequations}
\label{Rels}
\begin{align}
\label{Rel-A}
I_1^2I_3+(q+q^{-1})I_1I_3I_1+I_3I_1^2&=I_3+(q^{1/2}+q^{-1/2})\,\iota_2\,I_1+\iota_3,
\\
\label{Rel-B}
I_3^2I_1+(q+q^{-1})I_3I_1I_3+I_1I_3^2&=I_1+(q^{1/2}+q^{-1/2})\,\iota_2\,I_3+\iota_1.
\end{align}
\end{subequations}
\begin{remark}
The algebra \eqref{q-BI} can be obtained from the Zhedanov algebra \cite{1991_Zhedanov_TheorMathPhys_89_1146} by the formal substitution $q\rightarrow -q$ and scaling of the generators. The Zhedanov algebra was also studied by Koornwinder \cite{2007_Koornwinder_SIGMA_3_63} and Terwilliger \cite{2004_Terwilliger&Vidunas_JAlgebraAppl_3_411}.
\end{remark}
\subsection{Spectra of the Casimir operators} To investigate the Racah problem, we need to identify which representations of \eqref{q-BI} arise; this is done by determining the eigenvalues of the intermediate and total Casimir operators of $\mathfrak{osp}_q(1|2)$.

The eigenvalues of the intermediate Casimir operator $Q^{(12)}$, and hence those of $I_3$, are associated to the decomposition of the two-fold tensor product module $\widetilde{W}=W^{(\epsilon_1,\mu_1)}\otimes W^{(\epsilon_2,\mu_2)}$ in irreducible components. As a vector space, $\widetilde{W}$ has the direct sum decomposition
\begin{align*}
 \widetilde{W}=\bigoplus_{n=0}^{\infty} U_{n},
\end{align*}
where each $U_{n}$ is an eigenspace of $\Delta(A_0)$ with eigenvalue $n+\mu_1+\mu_2+1$. It is seen that $U_{n}$ is $(N+1)-$dimensional, as it is spanned by vectors $\ket{\epsilon_1,\mu_1;n_1}\otimes \ket{\epsilon_2,\mu_2;n_2}$ such that $n_1+n_2=n$. Since $\Delta(A_0)$ and $ \Delta(Q)$ commute, $U_{n}$ is stabilized by $\Delta(Q)$.
\begin{lemma}
 The eigenvalues of $\Delta(Q)$ on $U_n$ have the expression
 \begin{align}
 \label{Eigen}
  \vartheta_k=(-1)^{k+1}\epsilon_1\epsilon_2\,[k+\mu_1+\mu_2+1/2]_{q},\quad k=0,1,\ldots, n.
 \end{align}
\end{lemma}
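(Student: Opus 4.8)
The plan is to read off the eigenvalues of $\Delta(Q)$ on $U_n$ from the decomposition of $\widetilde W$ into irreducible $\mathfrak{osp}_q(1|2)$-modules, combined with the Casimir action \eqref{Multiple}. Since the $\star$-conditions \eqref{Star-Conditions} make each tensor factor unitary, $\widetilde W$ is completely reducible into a direct sum of modules of the type $W^{(\tilde\epsilon,\tilde\mu)}$, and on each such component $\Delta(Q)$ acts as the scalar $-\tilde\epsilon[\tilde\mu]_q$ by \eqref{Multiple}. It therefore suffices to determine which irreducibles occur and how they intersect each grade $U_n$.

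First I would exploit the grading by $\Delta(A_0)=A_0\otimes 1+1\otimes A_0$ (which follows from $\Delta(K)=K\otimes K$). The lowest-weight vector of a component $W^{(\tilde\epsilon,\tilde\mu)}$ carries $A_0$-eigenvalue $\tilde\mu+1/2$ and is annihilated by $\Delta(A_-)=A_-\otimes KP+K^{-1}\otimes A_-$; comparing with the eigenvalue $n+\mu_1+\mu_2+1$ of $U_n$ forces such a vector, if it lies in $U_k$, to have $\tilde\mu=k+\mu_1+\mu_2+1/2$, which matches the argument of the $q$-number in \eqref{Eigen}. To pin down the parity I note that $\Delta(P)=P\otimes P$ acts on every basis vector $\ket{\epsilon_1,\mu_1;n_1}\otimes\ket{\epsilon_2,\mu_2;n_2}$ of $U_k$ as the single scalar $\epsilon_1\epsilon_2(-1)^{n_1+n_2}=\epsilon_1\epsilon_2(-1)^k$, so the lowest-weight vector sitting in $U_k$ has $\tilde\epsilon=\epsilon_1\epsilon_2(-1)^k$. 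Feeding these into \eqref{Multiple} gives exactly $-\tilde\epsilon[\tilde\mu]_q=(-1)^{k+1}\epsilon_1\epsilon_2[k+\mu_1+\mu_2+1/2]_q=\vartheta_k$.

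Next I would settle the multiplicities by showing that exactly one irreducible component has its lowest-weight vector in each $U_k$, $k=0,1,2,\ldots$. This I would establish by proving $\Delta(A_-)\colon U_n\to U_{n-1}$ is surjective, so that $\dim\ker\Delta(A_-)|_{U_n}=\dim U_n-\dim U_{n-1}=1$; equivalently, one solves the two-term recurrence obtained by imposing $\Delta(A_-)v=0$ on $v=\sum_{n_1+n_2=n}c_{n_1,n_2}\ket{\epsilon_1,\mu_1;n_1}\otimes\ket{\epsilon_2,\mu_2;n_2}$ and checks that its solution space is one-dimensional, using $\sigma_n>0$ to guarantee the coefficients are well defined. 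Since the $k$-th component then meets $U_n$ in exactly one dimension for every $k\leqslant n$, the spectrum of $\Delta(Q)$ on the $(n+1)$-dimensional space $U_n$ is precisely $\{\vartheta_0,\vartheta_1,\ldots,\vartheta_n\}$, each eigenvalue simple.

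The main obstacle is the multiplicity-one step: complete reducibility supplies \emph{a} decomposition, but not a priori that the grades fill up one new component at a time. I expect the cleanest route is the explicit lowest-weight recurrence above, whose solvability rests on the positivity $\sigma_n>0$ already invoked for irreducibility; the remainder is bookkeeping of the $A_0$- and $P$-gradings feeding into \eqref{Multiple}.
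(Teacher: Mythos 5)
Your proposal is correct, but it reorganizes the argument rather than reproducing the paper's. The paper proves the lemma by induction on $n$: the level-$(n-1)$ eigenvectors $v_k$ are pushed up into $U_n$ by $\Delta(A_{+})$ (the Casimir commutes with everything, so the eigenvalues $\vartheta_0,\dots,\vartheta_{n-1}$ persist), and the one new eigenvector is the lowest-weight vector $w\in U_n$ with $\Delta(A_{-})w=0$, on which $\Delta(Q)$ is evaluated directly to give $\vartheta_n$; since the $\vartheta_k$ are distinct, the $(n+1)$-dimensional space $U_n$ is accounted for. You instead invoke complete reducibility of the unitary module $\widetilde W$, classify the constituents by their lowest weights, and pin the multiplicities by showing $\dim\ker\Delta(A_{-})\rvert_{U_n}=1$. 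The computational core is identical in both routes: the same two-term recurrence for the lowest-weight vector (solvable, with one-dimensional solution space, precisely because $\sigma_m>0$ for $m\geqslant 1$), the observation that all of $U_k$ is a $\Delta(P)$-eigenspace with eigenvalue $(-1)^{k}\epsilon_1\epsilon_2$, and the $\Delta(A_0)$-grading. What your route buys: the decomposition \eqref{Decomposition} and the simplicity of the spectrum of $\Delta(Q)$ on each $U_n$ come out as part of the proof, whereas the paper proves the lemma first and then reads off \eqref{Decomposition} as a corollary. What the paper's route buys: it is more elementary, needing neither complete reducibility nor a classification of constituents, only the exhibition of $n+1$ eigenvectors with distinct eigenvalues. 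Two steps you should make explicit if you write yours up: (i) every irreducible constituent of $\widetilde W$ is of lowest-weight type, because the $\Delta(A_0)$-spectrum is bounded below with finite-dimensional eigenspaces, so each component contains a nonzero vector annihilated by $\Delta(A_{-})$; and (ii) invoking \eqref{Multiple} presupposes that the component generated by a lowest-weight vector with data $(\tilde\epsilon,\tilde\mu)$ is isomorphic to $W^{(\tilde\epsilon,\tilde\mu)}$ — you can bypass this classification by evaluating the Casimir directly on the lowest-weight vector via $\Delta(Q)=\bigl[\Delta(A_{+})\Delta(A_{-})-\frac{q^{-1/2}\Delta(K)^{2}-q^{1/2}\Delta(K)^{-2}}{q-q^{-1}}\bigr]\Delta(P)$, which immediately yields $-\tilde\epsilon\,[\tilde\mu]_{q}$ and is exactly what the paper's ``calculation'' on $w$ amounts to.
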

\begin{proof}
 By induction on $n$. The case $n=0$ is verified directly by applying  $\Delta(Q)$ on the single basis vector $\ket{\epsilon_1,\mu_1;0}\otimes \ket{\epsilon_2,\mu_2;0}$ of $U_0$. Suppose that \eqref{Eigen} holds at level $n-1$ and let $v_k\in U_{n-1}$ for $k=1,\ldots,n-1$ denote the eigenvectors of $\Delta(Q)$ with eigenvalues \eqref{Eigen}. It is directly seen from the relations \eqref{OSPq} that the vectors $\Delta(A_{+}) v_{k}$ are in $U_n$ and that they are eigenvectors of $\Delta(Q)$ with the same eigenvalues. Consider the vector $w\in U_n$ such that $\Delta(A_{-})w=0$; such a vector is easily constructed in the direct product basis by solving a two-term recurrence relation. It follows from \eqref{Coprod} that $w$ is an eigenvector of $\Delta(P)$ with eigenvalue $(-1)^{n}\epsilon_1\epsilon_2$. A calculation shows that $w$ is an eigenvector of $\Delta(Q)$ with eigenvalue $\vartheta_n$. Hence the eigenvalues of $\Delta(Q)$ on $U_n$ are $\{\vartheta_0,\vartheta_1,\ldots, \vartheta_{n-1}\}\cup \{\vartheta_n\}$.
\end{proof}
It follows from the above lemma that one has the direct sum decomposition
\begin{align}
\label{Decomposition}
 W^{(\epsilon_i,\mu_i)}\otimes W^{(\epsilon_j,\mu_j)}=\bigoplus_{k=0}^{\infty} W^{(\epsilon_{ij}(k),\,\mu_{ij}(k))},
\end{align}
where
\begin{align}
\label{Numbers2}
 \epsilon_{ij}(k)=(-1)^{k}\epsilon_1\epsilon_2,\qquad \mu_{ij}(k)=k+\mu_1+\mu_2+1/2.
\end{align}
Upon using the decomposition \eqref{Decomposition} twice, one finds that the decomposition of the $\mathfrak{osp}_{q}(1|2)$-module $W$ in irreducible components has the form
\begin{align*}
 W=\bigoplus_{N=0}^{\infty}m_{N}\,W^{(\epsilon_{N}, \mu_{N})},
\end{align*}
where the multiplicity is $m_{N}=N+1$ and where 
\begin{align}
\label{Numbers}
 \epsilon_{N}=(-1)^{N}\epsilon_1\epsilon_2\epsilon_3,\qquad \mu_{N}=N+\mu_1+\mu_2+\mu_3+1.
\end{align}

It follows from the above discussion that the eigenvalues $\tau$ of the total Casimir operator $\mathcal{Q}$ are parametrized by the non-negative integer $N$ and read
\begin{align}
\label{TAUN}
 \tau\rightarrow \tau_{N}=-\epsilon_{N}\,[\mu_{N}]_{q},\qquad N=0,1,\ldots
\end{align}
where $\epsilon_{N}$ and $\mu_{N}$ are given by \eqref{Numbers}. The eigenvalues  $\tau_{12}$ and $\tau_{23}$ of the intermediate Casimir operators $Q^{(12)}$, $Q^{(23)}$ are respectively parametrized by the non-negative integers $n$, $s$ and read
\begin{alignat}{2}
\label{Tau_Inter}
\begin{aligned}
 \tau_{12}\rightarrow \tau_{12}(n)&=-\epsilon_{12}(n)\,[\mu_{12}(n)]_{q},\quad& n&=0,1,\ldots, N,
 \\
 \tau_{23}\rightarrow \tau_{23}(s)&=-\epsilon_{23}(s)\,[\mu_{23}(s)]_{q},\quad& s&=0,1,\ldots, N,
 \end{aligned}
\end{alignat}
where $\epsilon_{ij}(k)$ and $\mu_{ij}(k)$ are given by \eqref{Numbers2}. We can thus write the Racah coefficients for $\mathfrak{osp}_q(1|2)$ as 
\begin{align}
\label{Racah-Coef-2}
\left[\begin{matrix}
  \tau_1 & \tau_2 & \tau_3
  \\
  \tau_{12}(n) & \tau_{23}(s) & \tau_{N}
 \end{matrix}\right],\qquad n,s\in \{0,1,\ldots, N\},\qquad N=0,1,2,\ldots.
 \end{align}
These coefficients coincide with the interbasis expansion coefficients between the eigenbases of $I_1$ and $I_3$ in the $(N+1)$-dimensional representations of the algebra \eqref{q-BI} with Casimir value \eqref{Casimir-Value}.
\subsection{Representations} We construct the matrix elements of $I_1$ in the eigenbasis of $I_3$. In view of \eqref{Generator-Def}, \eqref{Numbers2}, \eqref{Tau_Inter}, the eigenvectors of $I_3$ denoted by $\ket{N;n}$ satisfy
\begin{align}
\label{Eigen2}
 I_3 \ket{N;n}=\lambda_n\, \ket{N;n},\quad n=0,1,\ldots,N,
\end{align}
where $\lambda_n=-\tau_{12}(n)$. The action of the operator $I_1$ on this basis can be written as
\begin{align}
\label{Matrix2}
 I_1\ket{N;n}=\sum_{k=0}^{N}A_{kn}\ket{N;k},
\end{align}
where $A_{kn}$ are the matrix elements of $I_1$. In view of \eqref{Eigen2}, \eqref{Matrix2} and since the basis vectors are linearly independent, the relation  \eqref{Rel-B} is equivalent to
\begin{align}
 \label{Eq-1}
 A_{kn}\left[\lambda_n^2+(q+q^{-1})\lambda_n\lambda_k+\lambda_k^2-1\right]=\delta_{kn}[(q^{1/2}+q^{-1/2}) \iota_2+\iota_1].
\end{align}
For $k\neq n$, the left-hand side of \eqref{Eq-1} must vanish. It is seen from the expression of the eigenvalues $\lambda_n$ that $A_{kn}$ can be non-zero only when $k=n\pm 1$ or $k=n$. As a result, the matrix representing $I_1$ in the $I_3$ eigenbasis is tridiagonal, i.e.
\begin{align}
\label{Tridiag}
 I_1 \ket{N;n}=U_{n+1}\ket{N;n+1}+ V_{n}\ket{N;n}+U_{n-1}\ket{N;n-1},
\end{align}
where by definition $U_0=0$, $U_{N+1}=0$ and where we have used the fact that $I_1$ is self-adjoint. When $n=k$, equation \eqref{Eq-1} gives the following expression for $V_{n}$:
\begin{align}
\label{Vn}
 V_{n}=\frac{\iota_1+(q^{1/2}+q^{-1/2})\iota_2 \lambda_n}{\lambda_n^2(2+q+q^{-1})-1}.
\end{align}
If one acts with the relation \eqref{Rel-A} on $\ket{N;n}$ and gathers all terms proportional to $\ket{N;n}$, one finds that $U_{n}^2$ satisfies the two-term recurrence relation
\begin{multline}
\label{Recurrence-A}
 (2\lambda_n+(q+q^{-1})\lambda_{n+1})\,U_{n+1}^2+(2\lambda_n+(q+q^{-1})\lambda_n) V_{n}^2
 \\
 +(2\lambda_n+(q+q^{-1})\lambda_{n-1})\,U_{n}^2=\lambda_n+(q^{1/2}+q^{-1/2})\iota_2 V_n+\iota_3.
\end{multline}
The solution to \eqref{Recurrence-A} can be presented as follows. Let $a$, $b$, $c$ and $d$ be defined as
\begin{alignat}{2}
\label{parameters}
\begin{aligned}
 a&=\epsilon_2\epsilon_3\,q^{\mu_2+\mu_3+1/2},\quad& b&=-\epsilon_1\epsilon_{N}\,q^{\mu_1-\mu_{N}+1/2},
 \\
 c&=-\epsilon_{1}\epsilon_{N}\,q^{\mu_1+\mu_{N}+1/2},\quad & d&=\epsilon_2\epsilon_3\,q^{\mu_2-\mu_3+1/2},
 \end{aligned}
\end{alignat}
and let $p=-q$. Up to an inessential phase factor, one has
\begin{align}
\label{Un}
 U_{n}=(q-q^{-1})^{-1}\,\sqrt{A_{n-1}C_n},
\end{align}
where $A_n$ and $C_n$ read
\begin{align}
\label{Recurrence-Coef}
\begin{aligned}
 A_{n}&=-\frac{(1+a b p^{n})(1-a c p^{n})(1-a d p^{n})(1-abcd p^{n-1})}{a(1-abcd p^{2n-1})(1-abcd p^{2n})},
 \\
 C_{n}&=\frac{a(1-p^{n})(1-bcp^{n-1})(1-bd p^{n-1})(1+cdp^{n-1})}{(1-abcd p^{2n-2})(1-abcd p^{2n-1})}.
 \end{aligned}
\end{align}
The coefficients $V_n$ given in \eqref{Vn} can be written as
\begin{align}
\label{Vn-2}
 V_{n}=(q-q^{-1})^{-1}\left[a-a^{-1}-A_n-C_n\right].
\end{align}
With $U_n$ and $V_n$ as in \eqref{Un} and \eqref{Vn-2}, the actions \eqref{Eigen2} and \eqref{Tridiag} define $(N+1)$-dimensional representations of \eqref{Rels} with value \eqref{Casimir-Value} of the Casimir operator \eqref{Casimir-q-BI}. Since $U_{n}\neq 0$ for $1 \leqslant n\leqslant N$, these representations are irreducible.

The matrix elements of the generators $I_1$, $I_3$ in the eigenbasis of $I_1$ are easily obtained. One observes that the relations \eqref{q-BI} and Casimir value \eqref{Casimir-Value} are all invariant under simultaneous cyclic permutations of the generators $I_i$ and representation parameters $\mu_i$ and $\epsilon_i$. As a result, the matrix elements of $I_1$, $I_3$ in the eigenbasis  $\{\ket{N;s}\}_{s=0}^{N}$ of $I_1$ are of the form
\begin{align}
\label{s-Basis}
\begin{aligned}
 I_1 \ket{N;s}&=\widetilde{\lambda}_s\ket{N;s},\qquad s=0,1,\ldots,N,
 \\
 I_3 \ket{N;s}&=\widetilde{U}_{s+1}\ket{N;s+1}+\widetilde{V}_{s}\ket{N;s}+\widetilde{U}_{s}\ket{N;s-1},
 \end{aligned}
\end{align}
where $\widetilde{\lambda}_s$, $\widetilde{U}_s$ and $\widetilde{V}_s$ are obtained from \eqref{Eigen2}, \eqref{Un} and \eqref{Vn-2} by applying the permutations $(\mu_1,\mu_2,\mu_3)\rightarrow (\mu_2,\mu_3,\mu_1)$ and $(\epsilon_1,\epsilon_2,\epsilon_3)\rightarrow(\epsilon_2,\epsilon_3,\epsilon_1)$.
\subsection{The Racah coefficients of $\mathfrak{osp}_q(1|2)$ as basic orthogonal polynomials} As explained in Subsection 3.3, the Racah coefficients \eqref{Racah-Coef-2} of $\mathfrak{osp}_q(1|2)$ coincide with the overlap coefficients $\braket{N;s}{N;n}$. These coefficients can be cast in the form
\begin{align}
\label{Cast}
 \braket{N;s}{N;n}=\omega_{s}\,G_{n}(s),\quad \text{where}\quad \omega_{s}=\braket{N;s}{N;0}\quad \text{and}\quad G_{0}(s)\equiv 1.
 \end{align}
Upon considering $\Braket{N;s}{I_1}{N;n}$ together with \eqref{Tridiag} and \eqref{s-Basis}, one finds that $G_{n}(s)$ satisfies the three-term recurrence relation
\begin{multline}
\label{Recurrence-1}
 (-1)^{s}\,(aq^{s}-a^{-1}q^{-s})\,G_{n}(s)=
 \\
 \sqrt{A_{n}C_{n+1}} G_{n+1}(s)+[a-a^{-1}-A_{n}-C_{n}]G_{n}(s)+\sqrt{A_{n-1}C_{n}} G_{n-1}(s),
\end{multline}
where $A_{n}$, $C_{n}$ are given by \eqref{Recurrence-Coef} with the parameterization \eqref{parameters}. 

If one takes
\begin{align*}
 \widehat{G}_{n}(s)=(-1)^{n}a^{n}\sqrt{A_0\cdots A_{n-1}\;C_1\cdots C_{n}}\,G_{n}(s),
\end{align*}
one finds that $\widehat{G}_{n}(s)$ satisfies the normalized recurrence relation
\begin{multline}
\label{Recu-5}
(p^{-s}+q^{2\mu_2+2\mu_3} p^{s})\,\widehat{G}_{n}(s)\\
 =\widehat{G}_{n+1}(s)+[1+q^{2\mu_2+2\mu_3}\,p-\check{A}_{n}-\check{C}_{n}]\widehat{G}_{n}(s)+\check{A}_{n-1}\check{C}_{n}\widehat{G}_{n-1}(s),
 \end{multline}
where  $p=-q$ and where
\begin{align*}
 \check{A}_{n}=-a\,A_{n},\qquad \check{C}_{n}=-a\,C_{n}.
\end{align*}
The recurrence relation \eqref{Recu-5} coincides with the normalized recurrence relation for the $p$-Racah polynomials $R_{n}(\mu(s);\alpha,\beta,\gamma,\delta\,\rvert\, p)$ of degree $n$ in the variable $\mu(s)=p^{-s}+\gamma \delta p^{s+1}$ \cite{2010_Koekoek_&Lesky&Swarttouw}. In consequence, the functions $G_{n}(s)$ appearing in the coefficients \eqref{Cast} are proportional to the $p$-Racah polynomials
\begin{align}
\label{Racah}
 R_{n}(\mu(s);\alpha,\beta,\gamma,\delta\,\rvert\,p)=
 {}_4\varphi_{3}
 \left(
 \genfrac{}{}{0pt}{}{p^{-n}, \alpha\beta p^{n+1},p^{-s},\gamma\delta p^{s+1}}{\alpha p, \beta\delta p, \gamma p}
 ;p,p
 \right),
\end{align}
where ${}_r\varphi_{s}$ is the generalized basic hypergeometric series \cite{2010_Koekoek_&Lesky&Swarttouw}
\begin{align*}
 {}_r\varphi_{s}
 \left(
 \genfrac{}{}{0pt}{}{a_1, \ldots,a_{r}}{b_1,\ldots,b_{s}}
 ;q,z
 \right)
 =
 \sum_{k=0}^{\infty}\frac{(a_1,\cdots a_r;q)_{k}}{(b_1,\cdots, b_{s};q)_{k}}(-1)^{(1+s-r)k} q^{(1+s-r)\binom{k}{2}}\;\frac{z^{k}}{(q;q)_{k}}.
\end{align*}
and where have used the standard notation:
\begin{align*}
 (a_1,a_2,\ldots,a_{k};q)_s=\prod_{i=1}^{k}(a_i;q)_{s},\qquad (a;q)_{s}=\prod_{k=1}^{s}(1-q^{k-1}a).
\end{align*}
Recall that $p=-q$. The relation between the parameters $\alpha,\beta,\gamma,\delta$ of the $p$-Racah polynomials and those appearing in \eqref{Recu-5} is
\begin{alignat}{2}
\label{Para}
\begin{aligned}
 \alpha&=-(-1)^{N}q^{\mu_1+\mu_2+\mu_3-\mu_{N}},\quad& \gamma&=-q^{2\mu_2} ,
 \\
 \beta&=-(-1)^{N}q^{\mu_1+\mu_2-\mu_3+\mu_{N}},\quad &\delta&=-q^{2\mu_3}.
 \end{aligned}
\end{alignat}
Using the expression \eqref{Numbers} for $\mu_{N}$, it is seen that one has $\alpha\,p=p^{-N}$, which is one of the admissible truncation condition for the $p$-Racah polynomials. 

The vectors $\ket{N;n}$ being orthonormal, one has the orthogonality relation
\begin{align}
\label{Ortho-1}
 \sum_{s=0}^{N}\braket{N;n'}{N;s}\braket{N;s}{N;n}=\sum_{s=0}^{N}\omega_{s}^2\,G_{n}(s)G_{n'}(s)=\delta_{nn'}.
\end{align}
Since the orthogonality weight for the $p$-Racah polynomials is unique, one concludes that $\omega_{s}$ in  \eqref{Cast} is the square root of the $p$-Racah weight function with parameters \eqref{Para}. The weight function $\Omega_{s}$ of the $p$-Racah polynomials reads \cite{2010_Koekoek_&Lesky&Swarttouw}
\begin{align*}
 \Omega_{s}(\alpha,\beta,\gamma,\delta;p)=\frac{(\alpha p, \beta\delta p,\gamma p, \gamma \delta p;p)_{s}}{(p,\alpha^{-1}\gamma \delta p, \beta^{-1}\gamma p,\delta p;p)_{s}}\frac{1-\gamma \delta p^{2s+1}}{(\alpha\beta p)^{s}(1-\gamma \delta p)},
\end{align*}
and the normalization coefficients $h_{n}$ are
\begin{multline*}
 h_{n}(\alpha,\beta,\gamma,\delta;p)=\frac{(\beta^{-1},\gamma \delta p^2;p)_{N}}{(\beta^{-1}\gamma p, \delta p; p)_{N}}\frac{(1-\beta p^{-N})(\gamma \delta p)^{n}}{(1-\beta p^{2n-N})}
 \\
 \times \frac{(p,\beta p, \beta \gamma^{-1}p^{-N},\delta^{-1}p^{-N};p)_{n}}{(\beta p^{-N},\beta \delta p,\gamma p,p^{-N};p)_{n}}.
\end{multline*}
The complete and explicit expression for the Racah coefficients of $\mathfrak{osp}_q(1|2)$ arising in the tensor product of three irreducible modules $W^{(\epsilon_i,\mu_i)}$ is thus
\begin{align*}
 \left[\begin{matrix}
  \tau_1 & \tau_2 & \tau_3
  \\
  \tau_{12}(n) & \tau_{23}(s) & \tau_{N}
 \end{matrix}\right]=(-1)^{n}\sqrt{\frac{\Omega_{s}(\alpha,\beta,\gamma,\delta;p)}{h_{n}(\alpha,\beta,\gamma,\delta;p)}} \;R_{n}(\mu(s);\alpha,\beta,\gamma,\delta;p),
\end{align*}
with the parametrization \eqref{Para} and $p=-q$. Let us remark that these Racah coefficients do not depend on the representation parameters $\epsilon_1,\epsilon_2,\epsilon_3$. 
\begin{remark}
 The Racah, or $6j$, coefficients of $\mathfrak{osp}_q(1|2)$ were also studied in \cite{1995_Minnaert&Mozrzymas_JMathPhys_36_907}. The authors considered different representations than the ones considered here. They focused in particular on finite-dimensional representations. The connection with orthogonal polynomials and the algebraic structure \eqref{q-BI} were not discussed.
\end{remark}
\begin{remark}
 The Clebsch-Gordan (CG) problem for $\mathfrak{osp}_q(1|2)$ arising in the tensor product of two irreducible representations was considered in \cite{1997_Chung&Kalnins&Miller_JPhysA_30_7147} and basic orthogonal polynomials with base $p=-q$ were seen to arise as CG coefficients.
\end{remark}
\section{$q$-analogs of the Bannai--Ito polynomials and Askey-Wilson polynomials with base $p=-q$}
In this section, the basic polynomials with basis $p=-q$ encountered above are presented independently from the Racah problem of $\mathfrak{osp}_{q}(1|2)$. Their relation with the Askey-Wilson polynomials with base $p=-q$ is discussed.

Consider the recurrence relation \eqref{Recurrence-1}, the recurrence coefficients \eqref{Recurrence-Coef} and the parametrization \eqref{parameters}. Defining $z=a\,p^{s}$, one is naturally led to introduce the polynomials $Q_{n}(x;a,b,c,d;q)\equiv Q_{n}(x)$ defined by the recurrence relation
\begin{align}
\label{Recurrence-2}
 (z-z^{-1})Q_{n}(x)=A_{n}Q_{n+1}(x)+[a-a^{-1}-A_{n}-C_{n}]Q_{n}(x)+C_{n}Q_{n-1}(x),
\end{align}
where $x=z-z^{-1}$ and where the recurrence coefficients read
\begin{align*}
\begin{aligned}
 A_{n}&=-\frac{(1+a b p^{n})(1-a c p^{n})(1-a d p^{n})(1-abcd p^{n-1})}{a(1-abcd p^{2n-1})(1-abcd p^{2n})},
 \\
 C_{n}&=\frac{a(1-p^{n})(1-bcp^{n-1})(1-bd p^{n-1})(1+cdp^{n-1})}{(1-abcd p^{2n-2})(1-abcd p^{2n-1})},
 \end{aligned}
\end{align*}
with $p=-q$. Upon comparing the recurrence relation \eqref{Recurrence-2} with that of the Askey-Wilson polynomials \cite{2010_Koekoek_&Lesky&Swarttouw}
\begin{align*}
 p_{n}(y;\mathfrak{a},\mathfrak{b},\mathfrak{c},\mathfrak{d}\rvert q)={}_4\varphi_{3}\left(
 \genfrac{}{}{0pt}{}{q^{-n}, \mathfrak{abcd}q^{n-1},\mathfrak{a} e^{i\theta}, \mathfrak{a}e^{-i\theta}}{\mathfrak{ab}, \mathfrak{ac}, \mathfrak{ad}}
 ; q,q\right),\quad y=\cos \theta,
\end{align*}
it is seen that the polynomials $Q_{n}(y;a,b,c,d\rvert q)$ can obtained from the Askey-Wilson polynomials by the formal substitutions
\begin{align*}
 e^{i\theta}\rightarrow i z,\quad \mathfrak{a}\rightarrow ia,\quad \mathfrak{b}\rightarrow ib,\quad \mathfrak{c}\rightarrow -ic,\quad \mathfrak{d}\rightarrow -i d,\quad q\rightarrow -q,
\end{align*}
where $i$ is the imaginary number. The polynomials $Q_{n}(x;a,b,c,d\rvert q)$ of degree $n$ in $x$ thus have the hypergeometric expression
\begin{align}
\label{Explicit}
 Q_{n}(x;a,b,c,d\rvert q)={}_4\varphi_{3}\left(
 \genfrac{}{}{0pt}{}{p^{-n}, abcdp^{n-1},-a z, az^{-1}}{-ab, ac, ad}
 ; p,p\right),
\end{align}
where $x=z-z^{-1}$ and where $p=-q$. 

The polynomials $Q_{n}(x;a,b,c,d\rvert q)$ satisfy a difference equation. Introduce the involution $\mathcal{I}_z$ defined by the action
\begin{align*}
 \mathcal{I}_z f(z)=f(z^{-1}),
\end{align*}
and let $\mathcal{D}_z$ be the divided-difference operator
\begin{align}
\label{Divided-Diff}
 \mathcal{D}_z=B(z)\,(T_{q}\mathcal{I}_z-1)+B(-z^{-1})\,(T_{q}^{-1}\mathcal{I}_z-1),
\end{align}
where $B(z)$ reads
\begin{align*}
 B(z)=\frac{(1+a z)(1+b z)(1-cz)(1-dz)}{(1+z^2)(1-q z^2)}.
\end{align*}
The operator \eqref{Divided-Diff} is very close to the Askey-Wilson operator, the main difference being the presence of the involution $\mathcal{I}_z$. A direct calculation using \eqref{Explicit} shows that the polynomials $Q_{n}(x;a,b,c,d\rvert q)$ satisfy the eigenvalue equation
\begin{align*}
 \mathcal{D}_{z}Q_{n}(x;a,b,c,d\rvert q)=\left[p^{-n}(1-p^{n})(1-abcd p^{n-1})\right]Q_{n}(x;a,b,c,d\rvert q).
\end{align*}
The operator $\mathcal{D}_z$ can be embedded in a realization of the $q$-deformed Bannai--Ito algebra \eqref{Rels}. If of takes
\begin{align}
\label{Realization}
\begin{aligned}
 \mathcal{J}_1&=\left(\frac{q^{1/2}}{(q-q^{-1})\sqrt{a b c d}}\right)\mathcal{D}_z+\left(\frac{q^{1/2}(q-a b c d)}{\sqrt{a b c d}(q^2-1)}\right),
 \\
 \mathcal{J}_2&=\frac{z-z^{-1}}{q-q^{-1}},
 \end{aligned}
\end{align}
it can be verified that one has
\begin{align*}
\mathcal{J}_2^2\mathcal{J}_1+(q+q^{-1})\mathcal{J}_2\mathcal{J}_1\mathcal{J}_2+\mathcal{J}_1\mathcal{J}_2^2&=\mathcal{J}_1+(q^{1/2}+q^{-1/2})\,\omega_3\,\mathcal{J}_2+\omega_1,
\\
\mathcal{J}_1^2\mathcal{J}_2+(q+q^{-1})\mathcal{J}_1\mathcal{J}_2\mathcal{J}_1+\mathcal{J}_2\mathcal{J}_1^2&=\mathcal{J}_2+(q^{1/2}+q^{-1/2})\,\omega_3\,\mathcal{J}_1+\omega_2,
\end{align*}
where the structure constants read
\begin{align*}
 \omega_1&=\frac{-q^{-1/2}(abcdq+abq^2-acq^2-bcq^2-adq^2-bdq^2+cdq^2+q^3)}{(1+q)(q-1)^2\sqrt{abcd}},
 \\
 \omega_2&=\frac{(a^2 bcdq+a b^2 cdq-abc^2d q-abcd^2 q-abcq^2-abd q^2+acd q^2+bcd q^2)}{(1+q)(q-1)^2 a b cd},
 \\
 \omega_3 &=\frac{-abc q-abdq +acd q+bcdq+aq^2+bq^2-cq^2-dq^2}{(1+q)(q-1)^2\sqrt{abcd}}.
\end{align*}
In the realization \eqref{Realization}, the Casimir operator \eqref{Casimir-q-BI} takes a definite value which is a complicated expression in the parameters $a$, $b$, $c$ and $d$.
\section{The $q\rightarrow 1$ limit}
\subsection{The $q\rightarrow 1$ limit of the Racah problem}
Consider the defining relations \eqref{First} of the $\mathfrak{osp}_{q}(1|2)$ algebra. In the $q\rightarrow 1$ limit, they take the form
\begin{align}
\label{lim1}
  [\widetilde{A}_0,\widetilde{P}]=0,\quad \{\widetilde{A}_{\pm},\widetilde{P}\}=0,\quad [\widetilde{A}_0,\widetilde{A}_{\pm}]=\pm \widetilde{A}_{\pm},\quad \{\widetilde{A}_{+},\widetilde{A}_{-}\}=2\widetilde{A}_0.
\end{align}
The relations \eqref{lim1} define the Lie superalgebra algebra $\mathfrak{osp}(1|2)$ extended by its grade involution, which is also referred to $sl_{-1}(2)$ \cite{2011_Tsujimoto&Vinet&Zhedanov_SIGMA_7_93}. In the same limit, the Casimir operator \eqref{OSPq-Casimir} reads
\begin{align}
\label{lim2}
 \widetilde{Q}=[\widetilde{A}_{+}\widetilde{A}_{-}-(\widetilde{A}_0-1/2)]\widetilde{P},
\end{align}
where the expression between the square brackets corresponds to the sCasimir of $\mathfrak{osp}(1|2)$ \cite{1995_Lesniewski_JMathPhys_36_1457}. The $q\rightarrow 1$ limit of the Hopf structure gives the coproduct
\begin{gather}
\begin{gathered}
\Delta(\widetilde{A}_0)=\widetilde{A}_0\otimes 1+1\otimes \widetilde{A}_0,
\\
\Delta(\widetilde{A}_{\pm})=\widetilde{A}_{\pm}\otimes \widetilde{P}+1\otimes \widetilde{A}_{\pm},\quad \Delta(\widetilde{P})=\widetilde{P}\otimes \widetilde{P},
\end{gathered}
\end{gather}
as well as the counit and coinverse 
\begin{gather}
\begin{gathered}
 \epsilon(\widetilde{P})=1,\quad \epsilon(\widetilde{A}_0)=0,\quad \epsilon(\widetilde{A}_{\pm})=0,
 \\
 \sigma(\widetilde{P})=\widetilde{P},\quad \sigma(\widetilde{A}_0)=-\widetilde{A}_0,\quad \sigma(\widetilde{A}_{\pm})=\widetilde{P}\widetilde{A}_{\pm},
 \end{gathered}
\end{gather}
as found in \cite{2000_Daskaloyannis&Kanakoglou&Tsohantjis_JMathPhys_41_652}.  The unitary $\mathfrak{osp}_q(1|2)$-modules $W^{(\epsilon,\mu)}$ also have a well-defined $q\rightarrow 1$ limit to unitary $\mathfrak{osp}(1|2)$-modules $V^{(\epsilon,\mu)}$. The actions \eqref{Actions} become
\begin{alignat}{2}
\label{dompe}
\begin{aligned}
 \widetilde{A}_0\,\ket{\epsilon,\mu;n}&=(n+\mu+1/2)\,\ket{\epsilon,\mu;n},\quad & \widetilde{P}\,\ket{\epsilon,\mu;n}&=\epsilon\, (-1)^{n}\,\ket{\epsilon,\mu;n},
 \\
 \widetilde{A}_{+}\ket{\epsilon,\mu;n}&=\sqrt{\widetilde{\sigma}_{n+1}}\,\ket{\epsilon,\mu;n+1},\quad & \widetilde{A}_{-}\,\ket{\epsilon,\mu;n}&=\sqrt{\widetilde{\sigma}_n}\,\ket{\epsilon,\mu;n-1},
 \end{aligned}
\end{alignat}
where $\widetilde{\sigma}_n=n+\mu(1-(-1)^{n})$. The modules $V^{(\epsilon,\mu)}$ associated to the actions \eqref{dompe} were the $\mathfrak{osp}(1|2)$-modules considered for the Racah problem in  \cite{2014_Genest&Vinet&Zhedanov_ProcAmMathSoc_142_1545}. The representations $V^{(\epsilon,\mu)}$ also have Bargmann realization on functions of argument $z$ defined by
\begin{gather}
\begin{gathered}
 \widetilde{A}_0(z)=z\partial_{z}+\mu+1/2,\quad \widetilde{P}(z)=\epsilon R_z,\quad \widetilde{A}_{+}(z)=z,
 \\
 \widetilde{A}_{-}(z)=\partial_{z}+\frac{\mu}{z}(1-R_{z}).
 \end{gathered}
\end{gather}
It is seen that in this realization $\widetilde{A}_{-}(z)$ coincides with the one-dimensional Dunkl derivative \cite{1989_Dunkl_TransAmerMathSoc_311_167}. The initial \eqref{Initial-Casimir}, intermediate \eqref{Intermediate-Casimir} and total \eqref{Total-Casimir} all have well defined limits when $q\rightarrow 1$. In this limit, the operators $\widetilde{I}_1=-\widetilde{Q}^{(23)}$ and $\widetilde{I}_3=-\widetilde{Q}^{(12)}$ satisfy the Bannai--Ito algebra relations
\begin{align}
\label{dompe2}
 \{\widetilde{I}_i,\widetilde{I}_j\}=\widetilde{I}_k+\omega_k,\quad  \omega_k=2(\mu_i\mu_j+\mu_k\mu),
\end{align}
where $(ijk)$ is an even permutation of $\{1,2,3\}$. The Casimir \eqref{Casimir-q-BI} reduces to
\begin{align}
 \widetilde{C}=\widetilde{I}_1^2+\widetilde{I}_2^2+\widetilde{I}_3^2,
\end{align}
and takes the value
\begin{align}
 \widetilde{C}=\mu_1^2+\mu_2^2+\mu_3^2+\mu^2-1/4.
\end{align}
These results are in accordance with those obtained in \cite{2014_Genest&Vinet&Zhedanov_ProcAmMathSoc_142_1545} and \cite{2015_Genest&Vinet&Zhedanov_CommMathPhys}. Adopting the same approach as the one used in this paper, one can obtain the spectra of the intermediate Casimir operators and construct the corresponding representations of \eqref{dompe2} to find the three-term recurrence relation satisfied by the Racah coefficients of $\mathfrak{osp}(1|2)$ and identify it with that of the Bannai--Ito polynomials.
\subsection{$q\rightarrow 1$ limit of the $q$-analogs of the Bannai--Ito polynomials}
Consider the polynomials defined by the recurrence relation \eqref{Recurrence-2}. Upon taking
\begin{align*}
 a=q^{2\rho_1+1/2},\quad b=-q^{-2r_2+1/2},\quad c=-q^{2\rho_2+1/2},\quad d=q^{-2r_1+1/2},\quad z=q^{x},
\end{align*}
dividing \eqref{Recurrence-2} by $(q-q^{-1})$ and taking the $q\rightarrow 1$ limit, one finds that the
the recurrence relation \eqref{Recurrence-2} becomes, in its normalized form,
\begin{align}
\label{dompe3}
 x\,\widetilde{Q}_{n}(x)=\widetilde{Q}_{n+1}(x)+(2\rho_1+1/2-\widetilde{A}_{n}-\widetilde{C}_n)\widetilde{Q}_{n}(x)+\widetilde{A}_{n-1}\widetilde{C}_{n}\widetilde{Q}_{n-1}(x),
\end{align}
where the coefficients read
\begin{align*}
 \widetilde{A}_{n}&=
 \begin{cases}
  \frac{(n+2\rho_1-2r_1+1)(n+2\rho_1-2r_2+1)}{2(n+\rho_1+\rho_2-r_1-r_2+1)} & \text{$n$ even}
  \\
  \frac{(n+2\rho_1+2\rho_2+1)(n+2\rho_1+2\rho_2-2r_1-2r_2+1)}{2(n+\rho_1+\rho_2-r_1-r_2+1)} & \text{$n$ odd}
 \end{cases},
 \\
  \widetilde{C}_{n}&=
 \begin{cases}
  -\frac{n(n-2r_1-2r_2)}{2(n+\rho_1+\rho_2-r_1-r_2)} & \text{$n$ even}
  \\
  -\frac{(n+2\rho_2-2r_1)(n+2\rho_2-2r_2)}{2(n+\rho_1+\rho_2-r_1-r_2)} & \text{$n$ odd}
 \end{cases}.
\end{align*}
Comparing with the recurrence relation \eqref{BI-Recurrence} satisfied by the Bannai--Ito polynomials, one sees from \eqref{dompe3} that $ \widetilde{Q}_n(x)=2^{n}B_{n}(\frac{x-1/2}{2})$. In consequence, the polynomials $Q_{n}(x;a,b,c,d\rvert q)$ defined by the recurrence relation \eqref{Recurrence-2} are $q$-analogs of the Bannai--Ito polynomials. Similarly, upon taking the limit when $q\rightarrow 1$ of the divided-difference operator \eqref{Divided-Diff} with parametrization \eqref{dompe3}, one finds
\begin{multline*}
 \lim_{q\rightarrow 1}\frac{\mathcal{D}_z}{q-q^{-1}}=\left(\frac{(x-2\rho_1-1/2)(x-2\rho_2-1/2)}{2x-1}\right)(T^{-}R-1)
 \\
 -\left(\frac{(x-2r_1+1/2)(x-2r_2+1/2)}{2x+1}\right)\left(T^{+}R-1\right),
\end{multline*}
which corresponds to \eqref{BI-Operator}, up to an affine transformation and a change of variable.
\section{Conclusion}
In this paper, the Racah problem for the quantum superalgebra $\mathfrak{osp}_q(1|2)$ was considered and a family of basic orthogonal polynomials that generalize the Bannai--Ito polynomials was proposed.  While these $q$-analogs of the Bannai--Ito polynomials are formally related to the Askey-Wilson polynomials, the two families of (truncated) polynomials exhibit different algebraic properties, the former arising in the Racah coefficients for the quantum superalgebra $\mathfrak{osp}_q(1|2)$ and the latter arising in the Racah coefficients for the quantum algebra $\mathfrak{sl}_q(2)$.

The results presented here and those of \cite{2011_Tsujimoto&Vinet&Zhedanov_SIGMA_7_93} suggest a connection between quantum superalgebras and quantum algebras when $q\rightarrow -q$; see also \cite{1992_Zachos_ModPhysLettA_7_1595}. Also of interest is the investigation of the transformation $q\rightarrow -q$ and its consequences for other families of polynomials of the Askey scheme. We plan to report on this in the near future.

\section*{Acknowledgements}
\noindent
VXG holds an Alexander-Graham-Bell fellowship from the Natural Science and Engineering Research Council of Canada (NSERC). The research of LV is supported in part by NSERC. AZ acknowledges the hospitality of the Centre de recherches math\'ematiques (CRM).


\end{document}